% -*- mode: LaTeX; -*-
\def\timestamp{%
Time-stamp: <lelek-nonmetric.tex: Tuesday 22-12-2009 at 10:57:04 (cet)>}
\def\stripname Time-stamp: <#1 #2>{#2}
\edef\filedate{\expandafter\stripname\timestamp}

\documentclass[a4paper]%
              {amsart}

\usepackage{amssymb}
\usepackage[backrefs]{amsrefs}

\newcommand{\cl}{\operatorname{cl}}
\newcommand{\orpr}[2]{\langle#1,#2\rangle}
\newcommand{\preim}{^\gets}

\newcommand\calB{\mathcal{B}}
\newcommand\calC{\mathcal{C}}
\newcommand\calU{\mathcal{U}}
\newcommand\calV{\mathcal{V}}

\DeclareMathSymbol\restr2{AMSa}{"16}
\DeclareMathSymbol\Q0{AMSb}{`Q}
\DeclareMathSymbol\R0{AMSb}{`R}

\if01
\DeclareMathSymbol\diagmap\mathord{symbols}{"34}
\DeclareMathSymbol\codiagmap\mathord{symbols}{"35}
\else
\DeclareMathSymbol\diagmap\mathord{AMSa}{"4D}
\DeclareMathSymbol\codiagmap\mathord{AMSa}{"4F}
\fi

%\newtheorem{claim}{Claim}

%\DeclareSymbolFont{cmmib}{OML}{cmm}{b}{it}
%\DeclareMathSymbol\xv0{cmmib}{`x}

\newtheorem{theorem}{Theorem}[section]
\newtheorem{proposition}[theorem]{Proposition}
\newtheorem{lemma}[theorem]{Lemma}

\theoremstyle{remark}

\newtheorem{question}[theorem]{Question}

\begin{document}

\title{Lelek's problem is not a metric problem}

\dedicatory{To Ken Kunen}

\author[D. Barto\v{s}ov\'a]{Dana Barto\v{s}ov\'a}
\address{Faculty of Mathematics and Physics\\
        Charles University in Prague\\
        Ke Karlovu 3\\
        121~16~Prague~2\\
        Czech Republic}
\email{dana.bartosova@gmail.com}

\author[K. P. Hart]{Klaas Pieter Hart}
\address{Faculty of Electrical Engineering, Mathematics and Computer Science\\
         TU Delft\\
         Postbus 5031\\
         2600~GA {} Delft\\
         the Netherlands}
\address{Department of Mathematics and Statistics\\
         Miami University\\
         Oxford\\
         OH~45056}
\email{k.p.hart@tudelft.nl}
\urladdr{http://fa.its.tudelft.nl/\~{}hart}

\author[L. C. Hoehn]{Logan C. Hoehn}
\address{Department of Mathematics\\ 
         University of Toronto\\
         Room~6290\\
         40 St.~George Street\\
         Toronto\\ Ontario\\ Canada M5S~2E4}
\email{logan.hoehn@utoronto.ca}

\author[B. van der Steeg]{Berd van der Steeg}
\address{DELTA N.V.\\ 
         afdeling Portfolio Analyse\\
         Poelendaelesingel 10\\
         4335~JA {} Middelburg\\
         the Netherlands}
\email{BvanderSteeg@delta.nl}

\date{\filedate}

\begin{abstract}
We show that Lelek's problem on the chainability of continua with span zero
is not a metric problem:
from a non-metric counterexample one can construct a metric one.
\end{abstract}

\subjclass[2000]{Primary: 54F15. 
                 Secondary: 03C20 03C68}
\keywords{continuum, chainable, span zero, reflection, 
          L\"owenheim-Skolem theorem, ultrapowers}

\maketitle

\section*{Introduction}

The notion of span of a metric continuum was introduced by Lelek 
in~\cite{MR0179766}, where he showed that chainable continua have span~zero,
and in~\cite{L2} he asked whether continua with span~zero are chainable.
This has become one of the classic problems of Continuum Theory, 
see~\cite{ency-f-01} for a recent survey.
%In attacks on this problem various modifications of span have been suggested
%but for all of these the problem remains open.

The purpose of this paper is not to solve Lelek's problem; our goal is more
modest: we show that a non-metrizable counterexample to the problem may be 
converted into a metrizable one.
This makes the tools of infinitary combinatorics available to those searching
for a counterexample.

Our proof makes use of methods from Model Theory, most notably the 
L\"owen\-heim-Skolem theorem.
Given a non-metric continuum one can use this theorem to obtain a metric
quotient that shares many properties with the original space.
Indeed, we shall prove that the quotient will be chainable if{}f the original 
space is and likewise for having span~zero.
The proof of one of the four implications is much more involved than
that of the others as it relies on Shelah's Ultrapower Isomorphism
theorem from~\cite{MR0297554}.
This suggests an obvious question that we shall discuss at the end of 
this paper.

Section~\ref{sec.prelim} contains some preliminaries.
We repeat the definitions of chainability and the various forms of span.
We also describe the results from Model Theory that will be used in the
proofs.
In section~\ref{sec.relect} we prove our main results and in 
section~\ref{sec.questions} we discuss some questions related to the proofs.

\section{Preliminaries}
\label{sec.prelim}

\subsection{Chainability and span}
\label{subsec.chain.span}

Let $X$ be a continuum, i.e., a connected compact Hausdorff space.
We say $X$ is \emph{chainable} if every finite open cover has a refinement
that is a chain, which means that it can be enumerated as 
$\langle V_i:i<n\rangle$ such that 
$V_i\cap V_j\neq\emptyset$ if{}f $|i-j|\le1$.

We shall deal with four kinds of span: span, semispan, surjective span,
and surjective semispan.
Each is defined, for a \emph{metric} continuum~$(X,d)$,
as the supremum of all $\epsilon\ge0$ for which there is
a subcontinuum~$Z$ of $X\times X$ with the property that
$d(x,y)\ge\epsilon$ for all $(x,y)\in Z$ and
\begin{itemize}
\item $\pi_1[Z]=\pi_2[Z]$, in the case of span;
\item $\pi_1[Z]\subseteq\pi_2[Z]$, in the case of semispan;
\item $\pi_1[Z]=\pi_2[Z]=X$, in the case of surjective span; or
\item $\pi_2[Z]=X$, in the case of surjective semispan.
\end{itemize}
Thus any one of the spans is equal to zero if \emph{every}
subcontinuum of~$X\times X$ with the corresponding property from the list
must intersect the diagonal $\Delta_X$ of~$X$.
This then yields four definitions of having span~zero for general continua.

There are relations between these four kinds of span~zero, corresponding
to the inclusion relations between the defining collections of subcontinua
of~$X\times X$; see~\cite{HartvanderSteeg} for a diagram
and also for a proof that chainability implies that all spans are zero.

The diagram in~\cite{HartvanderSteeg} also mentions (surjective) symmetric 
span, but, as reported in~\cite{MR722431}, the dyadic solenoid, which is
not chainable, has symmetric span~zero, so that symmetric span~zero does
not characterize chainability.
The reader will be able to check that having (surjective) symmetric span~zero
is also covered by our reflection results.

\subsection{Wallman representation}

In the construction of the metric quotient we employ the Wallman
representation of distributive lattices.

We start with a compact Hausdorff space~$X$ and consider its lattice of
closed sets~$2^X$.
Any sublattice, $L$, of~$2^X$ gives rise to a continuous image 
of~$X$: the space~$wL$ of ultrafilters on~$L$.
If $a\in L$ then $\bar a$ denotes $\{u\in wL:a\in u\}$; the family 
$\{\bar a:a\in L\}$ is used as a base for the closed sets in~$wL$.
In general this yields a $T_1$-space; the space~$wL$ is Hausdorff if{}f
$L$~is \emph{normal}, which means that disjoint elements of~$L$ can be
separated by disjoint open sets that are complements of members of~$L$.

In general, a lattice embedding $h:L\to K$ yields a continuous
onto map $wh:wK\to wL$, where $wh(u)$~is the unique ultrafilter
on~$L$ that contains~$\{a:h(a)\in u\}$ (this family is a prime filter), 
so that in our case we obtain a continuous onto map $q_L:X\to wL$.

It should be clear that $X$ is the Wallman space of $2^X$.
However, one space may correspond to many lattices.
Indeed, if $\calC$~ is a base for the closed sets of~$X$ that is 
closed under finite unions and intersections then $X=w\calC$.

The article~\cite{Aarts} gives a good introduction to Wallman representations.

\subsection{Elementarity}

To construct the metric quotient mentioned in the introduction we need a
special sublattice of~$2^X$, an \emph{elementary} sublattice.

In general a substructure~$A$ of some structure~$B$ 
(a group, a field, a lattice)
is said to be an \emph{elementary} substructure if every sentence in the
language for the structure, with parameters from~$A$, that is true in~$B$ 
is also true in~$A$.
A sentence is a formula without free variables and such a formula is true
in a structure if it holds with all its quantifiers bound by that structure.

As a quick example consider the subfield $\Q$ of $\R$: it is \emph{not} an 
elementary subfield because of the following sentence:
$$
(\exists x)(x^2=2)
$$
The parameter $2$ belongs to $\Q$; the sentence holds in~$\R$ but does not hold
in~$\Q$.
This example illustrates the source of the power of elementarity: because
all existential statements true in the larger structure must be 
true in the substructure this substructure is very rich.
In fact, an elementary subfield of~$\R$ must contain all real algebraic 
numbers and it is a non-trivial result that these numbers do in fact
form an elementary subfield of~$\R$.

By a straightforward closing-off argument one shows that every subset
of a structure can be expanded to an elementary substructure --- this is
the L\"owenheim-Skolem theorem 
\cite{HodgesShorterModelTheory}*{Corollary~3.1.4}.
In full it states that a subset, $C$, of a structure~$B$ can be expanded
to an elementary substructure~$A$ whose cardinality is at most
$\aleph_0\cdot|C|\cdot|\mathcal{L}|$, where $\mathcal{L}$ is the
language used to describe the structures.
In the case of lattices the language is countable: one needs $\land$, $\lor$
and~$=$ as well as logical symbols and (countably many) variables.
Thus every lattice has a countable elementary sublattice.

As we discuss in Section~\ref{sec.questions} the expressive power of
the language of lattices is not strong enough for our purposes;
therefore we consider structures for the language of Set Theory.
Any reasonably large set will do but usually one takes a `suitably large'
regular cardinal number~$\theta$ and considers the set $H(\theta)$ of
sets that are hereditarily of cardinality less than~$\theta$, which means
that they and their elements and their elements' elements and \dots{}
all have cardinality less than~$\theta$.
The advantage of these sets is that they satisfy all of the axioms of 
Set Theory, except possibly the power set axiom.

What will be particularly useful to us is that if $M$ is an elementary 
substructure of~$H(\theta)$ then $\omega$~is both an element and a subset 
of~$M$; this is because $\omega$ and each finite ordinal are uniquely 
defined in~$H(\theta)$ by a formula with just one free variable; therefore
they automatically belong to~$M$.
As a consequence of this every finite subset of~$M$ is an element of~$M$
and this will give us the extra flexibility that we need.

We refer to \cite{MR597342}*{Chapters~IV and~V} for information on the
sets~$H(\theta)$ and elementarity in the context of Set Theory.
Note that the language of Set Theory has even fewer non-logical symbols 
than that of lattice theory: $\in$ and $=$.
The lattice operations, $\cap$ and $\cup$, are derived from these.

\subsection{Ultrapowers and ultracopowers}
\label{subsec.ultra}

We shall be using ultrapowers of lattices so we need to fix some notation.
Let $L$ be a lattice;
given an ultrafilter~$u$ on a cardinal number~$\kappa$ we define the
ultrapower~$\prod_uL$ of~$L$ by~$u$ to be the quotient of~$L^\kappa$
by the equivalence relation~$\sim_u$ defined by
$\langle x_\alpha:\alpha<\kappa\rangle\sim_u
 \langle y_\alpha:\alpha<\kappa\rangle$ if{}f 
$\{\alpha:x_\alpha=y_\alpha\}\in u$.
We turn $\prod_uL$ into a lattice by defining the operations pointwise.
There is an obvious embedding $\diagmap:L \to \prod_uL$, 
the diagonal embedding, defined by sending an element~$a$ to the 
(class of the) sequence~$\langle a:\alpha<\kappa\rangle$.

Dual to this is the notion of ultracopower of a compact Hausdorff space~$X$
by an ultrafilter~$u$.
One can define it in two equivalent ways.
The first is as the Wallman representation of the ultrapower~$\prod_u2^X$
of the lattice~$2^X$ by~$u$.

The second is via the \v{C}ech-Stone compactification.
Consider the product $\kappa\times X$, 
where $\kappa$ carries the discrete topology, 
and the two projections $\pi_X:\kappa\times X\to X$ 
and $\pi_\kappa:\kappa\times X\to\kappa$.
These have extensions, $\beta\pi_X:\beta(\kappa\times X)\to X$ and
$\beta\pi_\kappa:\beta(\kappa\times X)\to\beta\kappa$ respectively.
The preimage $\beta\pi_\kappa\preim(u)$ is homeomorphic to the Wallman
representation of~$\prod_u2^X$.
This follows from the facts that 
\begin{enumerate}
\item $\beta(\kappa\times X)$ is the Wallman representation 
      of~$2^{\kappa\times X}$, which in turn is isomorphic
      to $(2^X)^\kappa$; and 
\item if $F$ and $G$ are closed subsets of $\kappa\times X$ then the  
      intersections $\cl_\beta F\cap\beta\pi_\kappa\preim(u)$ and 
      $\cl_\beta G\cap\beta\pi_\kappa\preim(u)$ are equal if{}f the set 
      of~$\alpha$s for which 
      $F\cap(\{\alpha\}\times X)= G\cap(\{\alpha\}\times X)$ belongs to~$u$.
\end{enumerate}
The topological viewpoint enables us to see easily that one may use any base,
$\calC$, for the closed sets that is closed under finite unions and finite 
intersections to construct the ultracopower.
Indeed, if $F$ and $G$ are closed and disjoint in~$\kappa\times X$ then
a compactness argument applied to $\{\alpha\}\times X$ for each~$\alpha$
will yield sequences $\langle B_\alpha:\alpha<\kappa\rangle$ and 
$\langle C_\alpha:\alpha<\kappa\rangle$ in~$\calC$ such that 
$B_\alpha\cap C_\alpha=\emptyset$ for all~$\alpha$, and 
$F\subseteq\bigcup_\alpha\{\alpha\}\times B_\alpha$ and
$G\subseteq\bigcup_\alpha\{\alpha\}\times C_\alpha$.

This then can be used to show that the dual to the inclusion map 
$\calC^\kappa\to(2^X)^\kappa$ is injective, so that 
$\beta(\kappa\times X)=w(\calC^\kappa)$, and, similarly, that the dual
to the inclusion map $\prod_u\calC\to\prod_u2^X$ is injective, which gives
us that $\beta\pi_\kappa\preim(u)$~is the Wallman representation 
of~$\prod_u\calC$.

We denote the ultracopower of~$X$ by~$u$ as $\coprod_uX$.
Also, if $\langle F_\alpha:\alpha<\kappa\rangle$~is a sequence of closed subsets
of~$X$ then we let $F_u$ be the intersection
of $\cl_\beta(\bigcup_\alpha\{\alpha\}\times F_\alpha)$ with~$\coprod_uX$;
in case $F_\alpha=F$ for all~$\alpha$ the set $F_u$ corresponds to the
image of~$F$ under the diagonal embedding into~$\prod_u2^X$.

The restriction of $\beta\pi_X$ to $\coprod_uX$ is induced by the
diagonal embedding~$\diagmap$, we shall denote it by~$\codiagmap$.

\section{Reflections}
\label{sec.relect}

We fix a continuum $X$, a suitably large cardinal~$\theta$ and a countable 
elementary substructure~$M$ of~$H(\theta)$ with $X\in M$;
as $\theta$~was taken large enough the entities $X\times X$, $2^X$ 
and $2^{X\times X}$ belong to~$M$ as well, by elementarity.
We let $L=M\cap 2^X$ and $K=M\cap2^{X\times X}$.
The family $\calB_L=\{wL\setminus F:F\in L\}$ is a base for the open 
sets of~$L$.

As $M$ is countable, so are $L$ and $K$.
Therefore $wL$ and $wK$ are compact \emph{metrizable} spaces.
We shall have proved our main result once we establish that
$wL$~is chainable iff $X$~is and that $wL$~has span zero iff $X$~does.

\subsection{Chainability}
\label{subsec.chain}

We first show that $X$ is chainable if and only if $wL$ is.
The forward implication is easiest to establish.

\begin{proposition}[\cite{vdS}*{Section~7.2}]
If $X$ is chainable then so is $wL$. \label{prop.chain.down}
\end{proposition}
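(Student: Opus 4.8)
The plan is to transfer a chaining of any finite open cover of $wL$ back to a chaining of the continuum $X$, and then exploit elementarity to push a chain upward. The key structural fact is that the quotient map $q_L\colon X\to wL$ is continuous and onto, and that the distinguished base $\calB_L$ consists of complements of members of the lattice $L=M\cap 2^X$. So I would begin by fixing a finite open cover $\calU$ of $wL$; by compactness and the definition of $\calB_L$ I may refine it to a finite cover $\calV=\{wL\setminus F_i : i<n\}$ with each $F_i\in L$.

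**Pulling the problem into $M$.** The heart of the argument is that the statement ``the cover $\{X\setminus F_i:i<n\}$ of $X$ has a chain refinement drawn from $L$'' can be arranged to be witnessed \emph{inside} $M$. Since $X$ is chainable and $X\in M$, the sentence asserting chainability of $X$ holds in $H(\theta)$, hence in $M$ by elementarity. The subtle point is to phrase chainability so that the witnessing chains can be taken from the lattice $L$ rather than from arbitrary open sets. Here I would use that $\calC$, a lattice base for the closed sets of $X$ lying in $M$, suffices to test chainability: a finite open cover has a chain refinement iff it has one consisting of complements of members of such a base. The parameters $F_0,\dots,F_{n-1}$ are finitely many elements of $L\subseteq M$, so the finite tuple $\langle F_i:i<n\rangle$ is itself an element of $M$ (this is exactly the remark in the preliminaries that finite subsets of $M$ are elements of $M$). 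Elementarity then yields a chain refinement $\langle W_j : j<m\rangle$ with each $W_j$ a complement of a member of $L$, and with the combinatorial pattern $W_j\cap W_k\neq\emptyset \iff |j-k|\le 1$ holding in $X$.

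**Transferring the chain down to $wL$.** The final step is to show that the images $q_L[W_j]$, or rather their saturations, form a chain refinement of $\calU$ in $wL$. Because each $W_j$ is the complement of a member of $L$, it corresponds under the Wallman duality to an open set $\widehat{W_j}=wL\setminus\overline{F}$ of the chosen base $\calB_L$, and the map $q_L$ is compatible with this correspondence: $q_L\preim[\widehat{W_j}]=W_j$. Since $q_L$ is onto, the nonempty-intersection pattern is preserved exactly, so $\langle \widehat{W_j}:j<m\rangle$ is again a chain with the same incidence pattern, and it refines $\calV$ hence $\calU$.

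**The main obstacle.** I expect the genuine difficulty to lie not in the topological transfer but in the careful formalization required to apply elementarity: one must express ``$\calV$ admits a chain refinement consisting of complements of lattice elements'' as a single first-order sentence over $H(\theta)$ with parameters in $M$, ensuring that all quantifiers range over objects (finite sequences of closed sets, the integer $m$, the incidence relation) that are correctly coded as sets and that the witnesses produced by $M$ genuinely live in $L$. The claim that it suffices to chain using a base that lies in $M$, and that the witnessing chain can be chosen inside $L$, is where all the work sits; once that is set up, elementarity and the surjectivity of $q_L$ do the rest more or less mechanically.
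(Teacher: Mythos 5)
Your proposal follows the same route as the paper's proof: refine the given cover of $wL$ to a finite subfamily of $\calB_L$, observe that this family is an element of $M$ because finite subsets of $M$ are elements of $M$, and apply elementarity to the existential statement ``there is a chain refinement'' so that the witness lies in $M$ and therefore consists of complements of members of $L$. One remark on the middle step: your detour through ``a lattice base lying in $M$ suffices to test chainability'' is both unproved and unnecessary. You never need to know in advance that chains can be drawn from a base; the witness that $M$ produces is a finite sequence belonging to $M$, so its terms are open sets belonging to $M$, i.e.\ automatically complements of members of $L$. (The paper even warns, in its final section, that chainability is not first-order in the language of lattices, so claims of this sort deserve care; base-independence is a consequence of the two chainability propositions, not an ingredient of their proofs.)

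The genuine problem is your last step. You transfer the chain to $wL$ via the claim $q_L\preim[\widehat{W_j}]=W_j$, i.e.\ $q_L\preim[\bar F]=F$ for $F\in L$; but in general only the inclusion $F\subseteq q_L\preim[\bar F]$ holds. Equality would require $L$ to separate points of $X$ from its own members, and $L=M\cap 2^X$ need not do this: there can be $x\notin F$ such that every member of $L$ containing $x$ meets $F$, and then $F$ belongs to the ultrafilter $q_L(x)$, so $q_L(x)\in\bar F$. For instance, let $X$ be the extended long line, $F=\{\infty\}$ its top point (which is in $M$), and $x$ the point at level $\sup(M\cap\omega_1)$: any closed $B\in M$ omitting $\infty$ is bounded, its supremum is definable from $B$ and hence lies in $M$, so it lies strictly below $x$; thus every member of $L$ containing $x$ contains $\infty$, and $q_L$ sends the whole tail above $x$ into $\bar F$, so $q_L\preim[\bar F]\supsetneq F$. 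With only the inclusion available, surjectivity of $q_L$ does show that \emph{empty} intersections pass from $X$ to $wL$ (the preimage of $\widehat{W_j}\cap\widehat{W_k}$ sits inside $W_j\cap W_k$), but it gives neither the nonempty consecutive overlaps, nor the covering of $wL$, nor the refinement relation --- precisely the positive parts of being a chain refinement. What makes the transfer work (and what the paper leaves implicit) is the correspondence $F\mapsto\bar F$ itself: it turns finite intersections and unions in $L$ into intersections and unions of the sets $\bar F$ (Wallman ultrafilters are prime filters), and by elementarity it sends nonempty elements to nonempty sets and proper elements to proper sets (the sentence ``every element $\neq X$ is disjoint from some nonempty element'' is first-order, true in $2^X$, hence true in $L$, and an ultrafilter containing that nonempty element avoids the given one). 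These facts yield $\bigcup_j\widehat{W_j}=wL$, the refinement relation, and the equivalence $\widehat{W_j}\cap\widehat{W_k}\neq\emptyset$ if{}f $W_j\cap W_k\neq\emptyset$, which is what your final step actually needs.
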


\begin{proof}
Let $\calU$ be a finite open cover of~$wL$.
By compactness we can find a finite subfamily~$\calB$ of~$\calB_L$ that 
refines~$\calU$.
Because every finite subset of~$M$ belongs to~$M$ we have $\calB\in M$.
Now the formula that expresses `$\calC$ is a chain refinement of~$\calB$' 
--- with $\calC$ as its only free variable --- is satisfied by a member
of~$H(\theta)$ and hence by an element of~$M$.
The latter consists of members of~$\calB_L$ and is a finite 
chain refinement of~$\calB$, and hence of~$\calU$.
\end{proof}

The converse implication is slightly harder to establish; in the proof
we employ the notion of a a precise refinement.
A \emph{precise} refinement of a cover~$\calU$ is a refinement, 
$\{V_U:U\in\calU\}$, indexed by~$\calU$ such that~$V_U\subseteq U$
for all~$U$.

\begin{proposition}[\cite{vdS}*{Section~7.3}]
If $X$ is not chainable then neither is~$wL$. \label{prop.chain.up}
\end{proposition}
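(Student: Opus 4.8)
The plan is to argue directly rather than by contraposition. The obstacle to a contrapositive proof is that an arbitrary finite open cover of~$X$ need not be refinable by $q_L$-saturated sets, so one cannot simply push a witnessing cover forward along $q_L\colon X\to wL$. Instead I would use elementarity to \emph{locate} a witness to non-chainability among covers that $wL$ can see, namely those whose members are complements of elements of~$L$. The elementary fact underlying everything is that $q_L\preim[\bar F]=F$ for $F\in L$, so that $q_L\preim[wL\setminus\bar F]=X\setminus F$; since $q_L$ is onto, $\{wL\setminus\bar F:F\in\mathcal F\}$ covers~$wL$ exactly when $\{X\setminus F:F\in\mathcal F\}$ covers~$X$.

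First I would encode non-chainability of~$X$ as the existence of a finite $\mathcal F\subseteq 2^X$ such that $\{X\setminus F:F\in\mathcal F\}$ covers~$X$ and there is no finite sequence $\langle G_i:i<n\rangle$ from~$2^X$ for which $\langle X\setminus G_i:i<n\rangle$ is a chain refinement of it. Because every open subset of~$X$ is the complement of a closed set, this is literally the statement that~$X$ has a finite open cover with no chain refinement, i.e.\ that~$X$ is not chainable. Crucially, all the quantifiers here range only over finite subsets of, and finite sequences from, the set~$2^X$, which is an element of~$M$; hence the statement is evaluated correctly in~$H(\theta)$ and, being true there, it is true in~$M$ by elementarity. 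This yields a witnessing $\mathcal F\in M$, which, being a finite element of~$M$, satisfies $\mathcal F\subseteq M\cap 2^X=L$.

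It then remains to check that $\mathcal W=\{wL\setminus\bar F:F\in\mathcal F\}$ witnesses non-chainability of~$wL$. By the first paragraph it is a finite open cover. If it had a chain refinement $\langle W_j:j<n\rangle$, I would pull it back along~$q_L$: each $q_L\preim[W_j]$ is open in~$X$, the family $\langle q_L\preim[W_j]:j<n\rangle$ refines $\{X\setminus F:F\in\mathcal F\}$, and it is again a chain, since surjectivity of~$q_L$ gives $q_L\preim[A]\cap q_L\preim[B]=q_L\preim[A\cap B]$, which is nonempty if{}f $A\cap B$ is. Setting $G_j=X\setminus q_L\preim[W_j]$ thus produces exactly the kind of chain refinement that~$\mathcal F$ was chosen to exclude, a contradiction. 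Hence~$wL$ is not chainable.

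I expect the reflection step to be the main point: one must phrase non-chainability as a first-order property of the structure~$H(\theta)$ with~$2^X$ as a parameter so that the witnessing cover is forced into~$L$. The feature that makes this clean is that complements of closed sets exhaust the open sets of~$X$, so no base-testing subtlety arises and every finite refinement is coded by a finite sequence from~$2^X$; the notion of precise refinement is a convenient device for recording such a refinement as data indexed by the finite cover. By contrast, the transfer in the last paragraph is routine once the observation that the surjection~$q_L$ preserves the chain pattern under preimages is in hand.
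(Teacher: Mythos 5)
Your overall route --- place a witness to non-chainability inside $M$ by elementarity, and then pull a hypothetical chain refinement in $wL$ back to $X$ along $q_L$ --- is genuinely different from the paper's, but as written it rests on a false lemma. The identity $q_L\preim[\bar F]=F$ for $F\in L$, which you call ``the elementary fact underlying everything,'' fails in general: only the inclusion $F\subseteq q_L\preim[\bar F]$ holds. A fiber of $q_L$ can meet both $F$ and $X\setminus F$ even when $F\in M$. For instance, take $X=[0,1]^{\omega_1}$ and $F=\{\bar0\}$, where $\bar0$ is the all-zeros point (definable, hence in $M$); if $x$ vanishes on every coordinate in $M\cap\omega_1$ but not everywhere, then elementarity shows that every $G\in L$ containing $x$ also contains $\bar0$ (a $G\in M$ missing $\bar0$ excludes a basic neighborhood of $\bar0$ whose finite support can be taken inside $M$), whence $q_L(x)=q_L(\bar0)\in\bar F$ while $x\notin F$. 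This breaks your first paragraph at exactly the direction you need: from ``$\{X\setminus F:F\in\mathcal F\}$ covers $X$'' you cannot conclude via preimages and surjectivity that $\{wL\setminus\bar F:F\in\mathcal F\}$ covers $wL$, because the preimages $q_L\preim[wL\setminus\bar F]$ may be strictly smaller than $X\setminus F$ and need not cover $X$. The repair is easy but different: ultrafilters are closed under finite intersections, so $\bigcap_{F\in\mathcal F}\bar F=\overline{\bigcap\mathcal F}=\bar\emptyset=\emptyset$. Your pullback paragraph, by contrast, survives, since the refinement claim $q_L\preim[W_j]\subseteq X\setminus F$ needs only the true inclusion $F\subseteq q_L\preim[\bar F]$.

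There is a second, smaller gap. Your final contradiction needs $\mathcal F$ to exclude chain refinements by \emph{arbitrary} closed subsets of $X$, since the sets $G_j=X\setminus q_L\preim[W_j]$ are typically not in $M$. But extracting $\mathcal F\in M$ as a witness of the existential statement gives, in the first instance, only the exclusion \emph{relativized to $M$} (no chain refinement by sequences lying in $M$) --- which is all the paper's proof ever uses. To get the full exclusion you must apply elementarity a second time, to the formula ``the cover coded by $\mathcal F$ has no chain refinement'' with parameter $\mathcal F\in M$: it holds in $M$, hence in $H(\theta)$, hence genuinely. This step is correct and standard, but it is the crux of your approach and must be explicit. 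With both repairs your proof works and is a real alternative: the paper never leaves $wL$, using normality and compactness to shrink an arbitrary refinement $\calV$ to a precise refinement by members of $\calB_L$, which the relativized exclusion then handles (at the cost of the slightly delicate claim that a precise refinement of a chain is again a chain); you instead transport the problem back to $X$, trading that topological argument for the extra model-theoretic step.
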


\begin{proof}
There is an open cover of~$X$ that does not have an open chain refinement.
This statement can be expressed by a formula, with parameters in~$M$, 
that is quite complicated: expressing that a cover does not have a chain 
refinement involves a quantification over all finite sequences of elements 
of~$2^X$.

By elementarity this formula holds in~$M$, so we can take an open cover, 
$\calU$, of~$X$ that belongs to~$M$ and that satisfies the formula
\emph{with all quantifiers restricted to~$M$}, which
means that $\calU$ has no chain refinements that consist of 
members of~$\calB_L$.

As $\calU$ is a subset of~$\calB_L$ it also forms an open cover of~$wL$.
We must show that $\calU$ does not have any open chain refinement at all.
Let $\calV$ be any finite open refinement of~$\calU$.
By normality we can find a closed cover~$\{F_V:V\in\calV\}$ of~$wL$ such that
$F_V\subseteq V$ for all~$V$.
By compactness we can find finite subfamilies $\calB_V$ of~$\calB_L$
such that $F_V\subseteq\bigcup\calB_V\subseteq V$ for all~$V$.
Then $\mathcal{W}=\{\bigcup\calB_V:V\in\calV\}$ is a refinement of~$\calU$ 
that consists of members of~$\calB_L$, hence it is not a chain refinement.
As $\mathcal{W}$~is a precise refinement of~$\calV$ the latter is not
a chain refinement of~$\calU$ either.
\end{proof}

\subsection{Products}

To establish that (non-)zero span is reflected we need to explore the
relationship between $wL\times wL$ and $wK$.

It is clear, by elementarity, that $K$ contains the families
$\{A\times X:A\in L\}$ and $\{X\times A:A\in L\}$.
We use $L'$ to denote the sublattice of~$K$ generated by these families.
We trust that the reader will recognize the formula implicit in 
the following proof.

\begin{lemma}
If $F$ and $G$ are elements of~$K$ with empty intersection then there
are $F'$ and~$G'$ in~$L'$ such that $F\subseteq F'$, $G\subseteq G'$ 
and $F'\cap G'=\emptyset$.  
\end{lemma}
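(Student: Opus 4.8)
The plan is to reflect a purely topological separation property of the product $X\times X$ down to $M$ by elementarity. First I would record what the lattice $L'$ actually looks like. Since $L$ is a sublattice of $2^X$ (hence closed under intersections) and $(A\times X)\cap(X\times B)=A\times B$, every element of the sublattice generated by $\{A\times X:A\in L\}$ and $\{X\times A:A\in L\}$ is a finite union of ``closed rectangles'' $\bigcup_{k<n}(A_k\times B_k)$ with all sides $A_k,B_k\in L$. Thus proving the lemma amounts to separating $F$ from $G$ by two such finite unions of rectangles whose sides come from~$L$.

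The topological heart of the argument is the following fact about $X\times X$, which I would establish in $H(\theta)$, i.e.\ in the ``real world'': \emph{if $F$ and $G$ are disjoint closed subsets of $X\times X$, then there are finite unions of closed rectangles} $F'=\bigcup_{k<n}(A_k\times B_k)$ \emph{and} $G'=\bigcup_{l<m}(C_l\times D_l)$ \emph{with} $F\subseteq F'$, $G\subseteq G'$ \emph{and} $F'\cap G'=\emptyset$. To see this, observe that $(X\times X)\setminus G$ is open and contains the compact set $F$; since $X$ is compact Hausdorff, hence regular, each point $(x,y)\in F$ lies in a basic open rectangle $U\times V$ whose closure $\overline U\times\overline V$ is still contained in $(X\times X)\setminus G$. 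Compactness of $F$ yields finitely many of these, whose union $F'$ is a finite union of closed rectangles with $F\subseteq F'$ and $F'\cap G=\emptyset$. Now $F'$ is closed and disjoint from $G$, so running the identical covering argument with the roles of $G$ and $F$ replaced by $G$ and $F'$ produces a finite union of closed rectangles $G'$ with $G\subseteq G'\subseteq(X\times X)\setminus F'$, whence $F'\cap G'=\emptyset$.

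It remains to transfer this down to $M$. The formula implicit in the proof is the existential statement, with parameters $X$, $2^X$, $F$ and~$G$ (all of which lie in $M$, since $F,G\in K$), asserting that there exist a natural number $n$, a finite sequence $\langle(A_k,B_k):k<n\rangle$ of pairs from $2^X$, a natural number $m$ and a finite sequence $\langle(C_l,D_l):l<m\rangle$ of pairs from $2^X$ such that $F\subseteq\bigcup_{k<n}(A_k\times B_k)$, $G\subseteq\bigcup_{l<m}(C_l\times D_l)$, and these two unions are disjoint. By the topological fact this holds in $H(\theta)$, so by elementarity it holds in~$M$. Because $\omega\subseteq M$ and $M$ is elementary, the entries of the witnessing sequences---each being definable from the sequence and a natural number, all of which lie in $M$---again belong to $M$, so that $A_k,B_k,C_l,D_l\in M\cap 2^X=L$. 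Hence $F'$ and $G'$ lie in~$L'$, as required. The routine steps (regularity to shrink the rectangles, compactness to extract finite subcovers) are standard; I expect the only point needing care to be isolating the separation property in a form that is simultaneously true for $X\times X$ and purely existential over $2^X$, so that once it is phrased as above the elementarity step delivers the sides in $L$ automatically.
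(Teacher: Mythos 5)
Your proof is correct and takes essentially the same route as the paper's: a compactness/regularity argument in $H(\theta)$ separating $F$ and $G$ by finite unions of closed rectangles, followed by elementarity (using $F,G\in M$ and the fact that entries of finite sequences in $M$ lie in $M$) to obtain witnessing sequences with sides in $L$, so that the two unions belong to $L'$. The only differences are presentational: you spell out the two-step covering argument and the normal form of elements of $L'$, both of which the paper leaves implicit.
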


\begin{proof}
By compactness there are finite families $\calU$ and $\calV$ of basic open 
sets such that $F\subseteq\bigcup\calU$, $G\subseteq\bigcup\calV$ and
$\cl\bigcup\calU\cap\cl\bigcup\calV=\emptyset$.
By elementarity, and because $F,G\in M$ there are in~$M$ two 
sequences $\bigl<\orpr{A_i}{B_i},i<n\bigr>$ and 
$\bigl<\orpr{C_j}{D_j},j<m\bigr>$ of pairs of closed sets such that
$F\subseteq\bigcup_{i<n}(A_i\times B_i)$, 
$G\subseteq\bigcup_{j<m}(C_j\times D_j)$ and
$\bigcup_{i<n}(A_i\times B_i)\cap\bigcup_{j<m}(C_j\times D_j) =\emptyset$.
The two unions belong to~$L'$ and are the sets $F'$ and $G'$ that we seek.
\end{proof}

This lemma implies that $wK=wL'$ in the sense that $u\mapsto u\cap L'$
is a homeomorphism between the two spaces.
Furthermore it should be clear that $L'$ serves as a lattice base for the
closed sets of~$wL\times wL$, so that $wL'=wL\times wL$.

We find that $wK=wL\times wL$ by means of a natural homeomorphism~$f$:
the diagonal of the two maps $p_1$ and $p_2$ from $wK$ to $wL$:
$p_1(u)=\{A\in L:A\times X\in u\}$
and $p_2(u)=\{A\in L:X\times A\in u\}$.

This implies that the product map $q_L\times q_L: X\times X\to wL\times wL$
can be factored as $f\circ q_K$; here $q_L:X\to wL$ and 
$q_K:X\times X\to wK$ are the maps dual to the inclusions 
$L\subseteq 2^X$ and $K\subseteq 2^{X\times X}$ respectively.
It also follows that $p_1$ and $p_2$ correspond to the projections
from $wL\times wL$ to~$wL$.

Where possible we will suppress mention of the map~$f$ and simply
identify $wK$ with $wL\times wL$; we also use $q_K$ in stead of $q_L\times q_L$.

\subsection{Reflecting non-zero span}

Using the above result on products we prove the first reflection result
on span.

\begin{proposition}[\cite{vdS}*{Section~7.4}]
In the span (of any kind) of $X$ is non-zero then the span (of the same kind)
of~$wL$ is non-zero too.
\end{proposition}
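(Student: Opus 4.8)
The plan is to pull a witness for non-zero span down into the elementary substructure~$M$ and then transport it to $wL\times wL=wK$ by the quotient map~$q_K$, using the product Lemma to keep the transported continuum off the diagonal. First I would record that, for each of the four kinds, ``the span of $X$ is non-zero'' is the assertion that there is a subcontinuum $Z$ of $X\times X$ with $Z\cap\Delta_X=\emptyset$ together with the appropriate condition on $\pi_1[Z]$ and $\pi_2[Z]$. Every parameter occurring here---$X$, $X\times X$, $2^{X\times X}$, the two projections, and the diagonal $\Delta_X$ (which is definable from~$X$, hence lies in~$M$)---belongs to~$M$. This existential sentence is true in $H(\theta)$, so by elementarity it is true in~$M$ and we obtain a witness $Z\in M$, that is, $Z\in K=M\cap 2^{X\times X}$. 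Because $M\prec H(\theta)$, this $Z$ is a \emph{genuine} subcontinuum: connectedness is expressed by a formula whose quantifiers range over the parameter $2^{X\times X}\in M$, so its truth is the same in~$M$ and in $H(\theta)$; likewise $Z$ really misses $\Delta_X$ and really has the projection property.

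Next I would feed the disjoint pair $Z,\Delta_X\in K$ into the product Lemma, obtaining $F',G'\in L'$ with $Z\subseteq F'$, $\Delta_X\subseteq G'$ and $F'\cap G'=\emptyset$. In $wK=wL\times wL$ disjoint lattice elements have disjoint closures (an ultrafilter containing both would contain their meet~$\emptyset$), so $\overline{F'}\cap\overline{G'}=\emptyset$; and from the trivial inclusion $q_K[a]\subseteq\overline{a}$ together with monotonicity I get $q_K[Z]\subseteq\overline{F'}$ and $q_K[\Delta_X]\subseteq\overline{G'}$. Since $q_L$ is onto, $q_K[\Delta_X]=(q_L\times q_L)[\Delta_X]$ is precisely the diagonal $\Delta_{wL}$ of~$wL$. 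Hence $q_K[Z]$ is disjoint from $\Delta_{wL}$.

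To finish, observe that $q_K[Z]$ is a subcontinuum of $wL\times wL$, being the continuous image of the continuum~$Z$, and that the relevant projection condition is inherited because $\pi_i\circ q_K=q_L\circ\pi_i$ and $q_L$ is surjective: for span, $\pi_1[Z]=\pi_2[Z]$ gives $\pi_1[q_K[Z]]=q_L[\pi_1[Z]]=q_L[\pi_2[Z]]=\pi_2[q_K[Z]]$, and the semispan and surjective variants follow the same way, using $q_L[X]=wL$. Thus $q_K[Z]$ witnesses that the span of $wL$ of the same kind is non-zero.

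The step I expect to be the real obstacle is keeping the transported continuum off the diagonal. One cannot simply push an arbitrary witness forward: the kernel $\{(x,y):q_L(x)=q_L(y)\}$ of~$q_L$ is in general strictly larger than $\Delta_X$, since the countable lattice~$L$ need not separate the points of~$X$, so a subcontinuum disjoint from $\Delta_X$ may still meet this kernel and have its image hit $\Delta_{wL}$. The product Lemma is exactly the tool that circumvents this, by separating $Z$ from $\Delta_X$ within~$L'$ and thereby forcing their closures apart in $wL\times wL$; the only other point needing care, namely that the reflected $Z$ be a genuine continuum, is handled by elementarity as above.
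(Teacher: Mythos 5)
Your proof is correct and follows essentially the same route as the paper: elementarity produces a witness $Z\in K$, the image $q_K[Z]$ is a continuum missing $\Delta_{wL}$, and the projection conditions transfer via $\pi_i\circ q_K=q_L\circ\pi_i$ and surjectivity of~$q_L$. The only (harmless) difference is your detour through the product Lemma to separate $Z$ from $\Delta_X$; the paper gets disjointness of the images directly from the fact that $Z$ and $\Delta_X$ are themselves disjoint members of the lattice~$K$, so no ultrafilter on~$K$ can contain both.
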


\begin{proof}
Because having non-zero span is an existential statement we immediately
apply elementarity to conclude that there is $Z\in M$ that is a subcontinuum  
of~$X\times X$, that is disjoint from the diagonal $\Delta_X$ of~$X$ and 
has the corresponding property from the list in 
subsection~\ref{subsec.chain.span}.

Since $Z$ and $\Delta_X$ belong to~$K$ their images under $q_K$ are disjoint
as well, so that $q_K[Z]$~is a continuum in $wL\times wL$ that 
is disjoint from~$\Delta_{wL}$.

Using the properties of the maps $q_L$ and $q_K$ derived above it follows 
that $q_K[Z]$ satisfies the same property as $Z$.
For example, if $\pi_1[Z]\subseteq\pi_2[Z]$
then 
$\pi_1\bigl[q_K[Z]\bigr]=
 q_L\bigl[\pi_1[Z]\bigr]\subseteq 
 q_L\bigl[\pi_2[Z]\bigr]=
 \pi_2\bigl[q_K[Z]\bigr]$.

Thus $wL$ inherits any kind of non-zero span that $X$ may have.
\end{proof}

\subsection{Reflecting span zero}

We now turn to showing that having span zero (of any kind) is reflected
down from~$X$ to~$wL$.
We do this by proving the contrapositive, i.e., that having 
non-zero span reflects upward from $wL$ to~$X$.

To this end we assume that $Z$ is a subcontinuum of $wL\times wL$ that does
not meet the diagonal~$\Delta_{wL}$ of~$wL$ and satisfies the property associated
to the type of span under consideration.
The obvious thing to do would be to find a continuum~$Z'$ in~$X\times X$
with the same property as~$Z$ and such that $Z=q_K[Z']$,
for then $Z'$~is a witness to $X$~having non-zero span of the same kind 
as~$wL$. 

The only way to obtain this $Z'$ seems to be via Shelah's Ultrapower theorem
from~\cite{MR0297554}, which says that if two structures, $A$ and $B$, for 
the same language are elementarily equivalent then there are a 
cardinal~$\kappa$ and an ultrafilter~$u$ on~$\kappa$ such that the ultrapowers 
of $A$ and $B$ by~$u$ are isomorphic.

It was noted by Gurevi\v{c} in~\cite{MR929393} that if $A$~is an elementary
substructure of~$B$ then the isomorphism $h:A_u\to B_u$ can be chosen
in such a way that the following diagram commutes
$$
\vbox{
 \halign{\hfil$#$\hfil& ${}#{}$& \hfil$#$\hfil\cr
          A & \buildrel e \over\longrightarrow & B\cr
\llap{$\diagmap$}\big\downarrow & & \big\downarrow\rlap{$\diagmap$}\cr
        A_u & \buildrel h \over\longrightarrow & B_u\cr
}}
$$
here $\diagmap$ is the diagonal embedding of a structure into its ultrapower
and $e$~is the elementary embedding of~$A$ into~$B$.
Inspection of the proof in~\cite{MR0297554} will reveal that one can start
its recursive construction with the identity map on the diagonal 
in~$A^\kappa$.

In \cite{MR1056373}*{Lemma~2.8} Bankston used this observation to show 
that if $e:A\to B$ is an elementary embedding of lattices then every
continuum in~$wA$ is the image, under the map dual to~$e$, 
of a continuum in~$wB$.
We shall use the proof of this result with a few extra twists to find
the desired continuum~$Z'$ in~$X\times X$.

We expand the language of lattices by adding three unary function symbols:
$p_1$, $p_2$ and~$i$.
In the case of the lattice $2^{X\times X}$ we interpret these
as follows:
\begin{itemize}
\item $p_1(F)=\pi_1[F]\times X$;
\item $p_2(F)=X\times\pi_2[F]$; and
\item $i(F)=\bigl\{\orpr xy:\orpr yx\in F\bigr\}$.
\end{itemize}
These interpretations belong to~$M$ so that $K$~is also an elementary
substructure of~$2^{X\times X}$ with respect to the extended language.

We apply Gurevi\v{c}'s remark to $K$ and $2^{X\times X}$ to obtain a 
cardinal~$\kappa$ and an ultrafilter~$u$ on~$\kappa$ such that
there is an isomorphism, with respect to the extended language,
$h:\prod_uK\to \prod_u2^{X\times X}$ for which $\diagmap\circ e=h\circ\diagmap$.
The dual, $wh$, of~$h$ is a homeomorphism between $\coprod_u(X\times X)$ and 
$\coprod_uwK$ for which the dual equality $q_K\circ\codiagmap=\codiagmap\circ wh$
holds.
By the remark at the end of Subsection~\ref{subsec.ultra} we know that
$\coprod_uwK$ is the Wallman representation of both~$\prod_uK$ 
and~$\prod_u2^{wK}$.

We consider the closed subset $Z_u$ of $\coprod_uwK$.
We know that $Z=\codiagmap[Z_u]$ and that $Z_u$ is a continuum,
so $Z^+=(wh)^{-1}[Z_u]$ is a continuum as well.
We let $Z'=\codiagmap[Z^+]$.
Then $Z'$ is a subcontinuum of~$X\times X$ and 
$$
q_K[Z']=q_K\bigl[\codiagmap[Z^+]\bigr]
         =\codiagmap\bigl[wh\bigl[(wh)^{-1}[Z_u]\bigr]\bigr]
         =\codiagmap[Z_u]=Z
$$
Thus far we have followed Bankston's argument; we now turn to showing
that $Z'$ has the same property as~$Z$.
Because $q_K[Z']=Z$ we know that $Z'$~is disjoint from~$\Delta_X$.
As to the mapping properties: we shall prove that 
$\pi_1[Z]\subseteq\pi_2[Z]$ implies $\pi_1[Z']\subseteq\pi_2[Z']$, 
leaving any obvious modifications for the other cases to the reader.

Let $K_Z=\{F\in K:Z\subseteq \bar F\}$.
Since $K$ is a base for the closed sets of $wK$ we know that 
$Z=\bigcap\{\bar F:F\in K_Z\}$.
Next we observe that for $F\in K_Z$ there is $G\in K_Z$ such 
that $G\subseteq F$
and  $\pi_1[G]\subseteq\pi_2[F]$.
Indeed, let $G=F\cap\pi_1\preim\bigl[\pi_2[F]\bigr]$, then
$G\in K_Z$ because $\pi_1[Z]\subseteq\pi_2[Z]$, and 
$\pi_1[G]\subseteq\pi_1[F]\cap\pi_2[F]$.
When we reformulate this in terms of our extended language we find that
for every $F\in K_Z$ there is $G\in K_Z$ such that $G\subseteq F$ and
$i(p_1(G))\subseteq p_2(F)$.

Even though $Z$ is not (necessarily) a member of $K$ this carries over to 
$\coprod_uwK$, \emph{because} $\prod_uK$ is a base for the
closed sets of~$\coprod_uwK$ and because for every element 
$\langle F_\alpha:\alpha<\kappa\rangle$ of~$K^\kappa$ such that
$Z\subseteq F_\alpha$ for all~$\alpha$ we can find 
$\langle G_\alpha:\alpha<\kappa\rangle$ such that
$Z\subseteq G_\alpha\subseteq F_\alpha$ and 
$i(p_1(G_\alpha))\subseteq p_2(F_\alpha)$ for all~$\alpha$.

Thus we find that 
$Z_u=\bigcap\{\bar F:F\in \prod_uK_Z\}$ and for every $F\in\prod_uK_Z$ 
there is $G\in\prod_uK_Z$ such that $G\subseteq F$ and 
$i(p_1(G))\subseteq p_2(F)$.

Now apply the homeomorphism $(wh)^{-1}$ (and the isomorphism~$h$)
to see that the same holds 
for~$Z^+$ and the family $h\bigl[\prod_uK_Z\bigr]$, the latter
is equal to $\{G\in\prod_u2^{X\times X}:Z^+\subseteq \bar G\}$.

Finally, let $z$ be a point outside~$\pi_2[Z']$; we show it is not 
in~$\pi_1[Z']$ either.
To begin, $Z'$ and $X\times\{z\}$ are disjoint.
By compactness we can find open sets $U$ and $V$ with disjoint closures
such that $z\in U$ and $Z'\subseteq X\times V$.
Let $P=X\times(X\setminus U)$ and $Q=X\times(X\setminus V)$.
Now $Q_u\subseteq \codiagmap\preim[Q]$, so that $Q_u\cap Z^+=\emptyset$;
but $P_u\cup Q_u=\coprod_u(X\times X)$, hence $Z^+\subseteq P_u$.
Hence there is $\langle R_\alpha:\alpha<\kappa\rangle$ in~$\prod_u2^{X\times X}$
such that $Z^+\subseteq R_u\subseteq P_u$ 
and $\pi_1[R_\alpha]\subseteq \pi_2[P]$ for all~$\alpha$.
It follows that 
$\pi_1[Z']\subseteq\cl\bigcup_\alpha\pi_1[R_\alpha]\subseteq\pi_2[P]$, so that
$z\notin\pi_1[Z']$.

\section{Remarks and Questions}
\label{sec.questions}

\subsection{Elementarity, I}
The reader will undoubtedly have reflected on the amount of machinery that we
brought to bear on the seemingly simple properties of chainability and
having span~zero.
One would expect that taking an elementary sublattice of $2^X$ would 
be enough.
In the case of chainability this is not the case.
The proofs of propositions~\ref{prop.chain.down} and~\ref{prop.chain.up}
show that chainability is what one would call a base-independent property:
a continuum is chainable if{}f some\slash every lattice-base satisfies
the chainability condition.
On the other hand, as shown in~\cite{HartvanderSteeg} no ultracopower 
$\coprod_u[0,1]$ of the unit interval by an ultrafilter on~$\omega$ is 
chainable.
Now $2^{[0,1]}$ is an elementary substructure of its corresponding ultrapower;
hence $[0,1]$ and $\coprod_u[0,1]$ have elementarily equivalent bases:
they satisfy the same first-order lattice-theoretic sentences.
Because one space is chainable and the other is not we conclude that 
chainability is not expressible by a first-order sentence in the language
of lattices.

This changes when we use the language of set theory;
chainability is first-order when expressed in this language:
for every finite set~$\calU$ that is an open cover there are a finite 
ordinal~$n$ and an indexed family $\langle V_i:i<n\rangle$ of open sets
such that \dots.
We needed the expressive power of set theory to be able to
take finite subsets of our lattice of unspecified cardinality.

The proofs on span relied on the equality $wK=wL\times wL$, which again
needed the availability of all possible finite subsets of the substructure.

\subsection{Elementarity, II}

The proof on reflection of span~zero used Shelah's ultrapower
isomorphism theorem to associate to a continuum in~$wK$ a continuum 
in~$X\times X$.
This raises an obvious question.

\begin{question}
Can one obtain the continuum $Z'$ and prove its properties 
by more elementary (pun intended) means?  
\end{question}

The reflection of surjective (semi)span zero can be established by elementary
means, though without actually exhibiting a continuum~$Z'$ as in the question 
above.

To see this for the case of surjective semispan let $Z$ be a subcontinuum 
of~$wL\times wL$ that is disjoint from the diagonal and find $Y\in K$ that 
contains~$Z$ and is also disjoint from the diagonal.
Back in~$X\times X$ the closed set~$Y$ has the property that none of its
components maps onto~$X$ under the map~$\pi_2$.
Let $C$ be such a component and take $x\in X\setminus\pi_2[C]$;
as $C\cap(X\times\{x\})=\emptyset$ there must be a relatively clopen subset~$D$
of~$Y$ that contains~$C$ and that is also disjoint from~$X\times\{x\}$.
This yields a finite partition of~$Y$ into closed sets, none of which maps
onto~$X$ under~$\pi_2$.
By elementarity there is such a partition in~$M$; since $Z$~must be a subset
of one of the pieces of this partition we find that~$\pi_2[Z]\neq wL$.

If the case of surjective span each piece, $D$, of the partition will satisfy
`$\pi_1[D]\neq X$ or $\pi_2[D]\neq X$', resulting in
`$\pi_1[Z]\neq wL$ or $\pi_2[Z]\neq wL$'.

\smallskip
Another question is related to the result in~\cite{HartvanderSteeg} that
no ultracopower of~$[0,1]$ by an ultrafilter on~$\omega$ has span~zero.

\begin{question}
Is having span~zero a base-independent property?
\end{question}

If it is base-independent then the formulation cannot be first-order.

\begin{bibdiv}
\begin{biblist}

\bib*{ency}{book}{
  title={Encyclopedia of general topology}, 
  editor={Hart, Klaas Pieter}, 
  editor={Nagata, Jun-iti}, 
  editor={Vaughan, Jerry E.}, 
  publisher={Elsevier Science Publishers B.V.}, 
  place={Amsterdam}, 
  date={2004}, 
  pages={x+526}, 
  isbn={0-444-50355-2}, 
%  review={\MR {2049453 (2005d:54001)}}
}

%\tracingall

\bib{Aarts}{article}{
  author={Aarts, J. M.},
  title={Wallman-Shanin Compactification},
  pages={218\ndash 220},
  xref={ency}
}

\bib{MR1056373}{article}{
   author={Bankston, Paul},
   title={Taxonomies of model-theoretically defined topological properties},
   journal={Journal of Symbolic Logic},
   volume={55},
   date={1990},
%   number={2},
   pages={589--603},
   issn={0022-4812},
   review={\MR{1056373 (91g:03065)}},
}

\if01
\bib{MR1780075}{article}{
   author={Bankston, Paul},
   title={A hierarchy of maps between compacta},
   journal={Journal of Symbolic Logic},
   volume={64},
   date={1999},
%   number={4},
   pages={1628--1644},
   issn={0022-4812},
   review={\MR{1780075 (2001h:03069)}},
}
\fi

\bib{MR722431}{article}{
   author={Davis, James Francis},
   title={The equivalence of zero span and zero semispan},
   journal={Proceedings of the American Mathematical Society},
   volume={90},
   date={1984},
 %  number={1},
   pages={133--138},
   issn={0002-9939},
   review={\MR{722431 (85k:54036)}},
}

\bib{Engelking1989}{book}{
    author = {Engelking, Ryszard},
     title = {General topology},
    series = {Sigma Series in Pure Mathematics},
    volume = {6},
   edition = {Second Edition},
%      note = {Translated from the Polish by the author},
 publisher = {Heldermann Verlag},
   address = {Berlin},
      year = {1989},
     pages = {viii+529},
    review = {\MR{1039321 (91c:54001)}},
}

\bib{MR929393}{article}{
   author={Gurevi{\v{c}}, R.},
   title={On ultracoproducts of compact Hausdorff spaces},
   journal={Journal of Symbolic Logic},
   volume={53},
   date={1988},
%   number={1},
   pages={294--300},
   issn={0022-4812},
   review={\MR{929393 (89b:03051)}},
}

\bib{ency-print}{book}{
  xref={ency}
}

\bib{HartvanderSteeg}{article}{
author={Hart, Klaas Pieter},
author={van der Steeg, Berd},
title={Span, chainability and the continua $\mathbb{H}^*$ and $\mathbb{I}_u$},
journal={Topology and its Applications},
    volume={151},
      date={2005},
%    number={1-3},
     pages={226\ndash 237},
      issn={0166-8641},
    review={\MR{2139754}},
}

\if01
\bib{MR1221741}{book}{
   author={Hodges, Wilfrid},
   title={Model theory},
   series={Encyclopedia of Mathematics and its Applications},
   volume={42},
   publisher={Cambridge University Press},
   place={Cambridge},
   date={1993},
   pages={xiv+772},
   isbn={0-521-30442-3},
   review={\MR{1221741 (94e:03002)}},
}
\fi

\bib{HodgesShorterModelTheory}{book}{
    author={Hodges, Wilfrid},
     title={A shorter model theory},
 publisher={Cambridge University Press},
     place={Cambridge},
      date={1997},
     pages={x+310},
      isbn={0-521-58713-1},
    review={\MR{98i:03041}},
}

\bib{MR597342}{book}{
   author={Kunen, Kenneth},
   title={Set theory. An introduction to independence proofs},
   series={Studies in Logic and the Foundations of Mathematics},
   volume={102},
%   note={An introduction to independence proofs},
   publisher={North-Holland Publishing Co.},
   place={Amsterdam},
   date={1980},
   pages={xvi+313},
   isbn={0-444-85401-0},
   review={\MR{597342 (82f:03001)}},
}

\bib{MR0179766}{article}{
   author={Lelek, A.},
   title={Disjoint mappings and the span of spaces},
   journal={Fundamenta Mathematicae},
   volume={55},
   date={1964},
   pages={199--214},
   issn={0016-2736},
   review={\MR{0179766 (31 \#4009)}},
}

\bib{L2}{article}{
    author={Lelek, A.},
     title={On the surjective span and semispan of connected metric spaces},
      date={1977},
   journal={Colloquium Mathematicum},
    volume={37},
%    number={1},
     pages={35\ndash 45},
    review={\MR{58 \#2737}},
}

\bib{ency-f-01}{article}{
  author={Mayer, John C.},
  author={Oversteegen, Lex G.},
  title={Continuum Theory},
  pages={299\ndash 303},
  xref={ency}
}

\bib{MR0297554}{article}{
   author={Shelah, Saharon},
   title={Every two elementarily equivalent models have isomorphic
   ultrapowers},
   journal={Israel Journal of Mathematics},
   volume={10},
   date={1971},
   pages={224--233},
   issn={0021-2172},
   review={\MR{0297554 (45 \#6608)}},
}

\bib{vdS}{thesis}{
 author={van der Steeg, B. J.},
 title={Models in Topology},
 type={PhD thesis},
 date={2003},
 institution={TU Delft}
}

\end{biblist}
\end{bibdiv}

\end{document}